%% file: S26052.tex
\documentclass{article}
\usepackage{amsmath,amssymb,amsthm,stmaryrd,makeidx}
\usepackage[all]{xy}

\input{defn}

\input xypic

\newtheorem{proposition}{Proposition}

\newtheorem{lemma}{Lemma}
\newtheorem{corollary}{Corollary}
\newtheorem{definition}{Definition}

\newtheorem{remark}{Remark}

\newtheorem{example}{Example}

\makeindex

\begin{document}
\author{Arvid Siqveland}
\title{Sites and Grothendieck Topologies\\Sites and Sheaves}

\maketitle

\begin{abstract} We give the minimum of category theory necessary for understanding the definition and applications of Grothendieck topos. We state the basic properties of sites and sheaves, and give applications to the theory of moduli. We prove that for categories $\mathbf C$ with  explicit stated properties, we can construct a category of schemes of objects in $\mathbf C$ which is a proper category rather than a $2$-category as in the case of stacks. 
\end{abstract}

\section{Introduction}
This is the lecture notes for my contribution to the summer school on $K$-theory at the University of Stavanger named "Towards Quadratic Intersection Theory", to be held 17. - 18.08.2026. My talk will be the content of Section \ref{lecturechapter}, the preliminaries is given in  the earlier sections. The sections following Section \ref{lecturechapter} proves that we can construct moduli schemes of objects in abelian categories with Cartesian products.

\section{Category Theory and the Yoneda Lemma}

The core philosophy is that in the search for a moduli object (parametrizing mathematical objects $F(M)$), the ultimate goal is to find a category $\cat U$ such that it contains a unique object $M\in\ob\cat U$ which is a moduli (space) for $F(B).$

\subsection{Categories}

\begin{definition} A category $\cat C$ is the pair $(\ob\cat C,\{\mor_{\cat C}(C_1,C_2)\}_{C_1,C_2\in\ob\cat C})$ where $\ob \cat C$ is a collection of objects, and for each pair of objects $C_1,C_2\in\ob\cat C,$ $\mor_{\cat C}(C_1,C_2)$ is a collection of objects called morphisms from $C_1$ to $C_2,$ and denoted $\phi:C_1\rightarrow C_2,$ with the following two properties:
\begin{itemize}
\item[1.] For each triple $C_1,C_2,C_3,$ there exists an associative operation $$\circ:\mor_{\cat C}(C_1,C_2)\times\mor_{\cat C}(C_2,C_3)\rightarrow\mor_{\cat C}(C_1,C_3).$$ 
\item[2.] For each $C\in\ob\cat C$ there is an element $\id_C\in\mor_{\cat C}(C,C)$ such that for $\phi:C\rightarrow D$ we have $\id_D\circ\phi=\phi=\phi\circ\ \id_C.$ 
\end{itemize}
For every category $\cat C$ the opposite category $\cat C^\text{op}$ is the category with the same objects as $\cat C$ and with $\mor_{\cat C^\text{op}}(C_1,C_2)=\mor_{\cat C}(C_2,C_1).$
\end{definition}

Notice that we do not give any internal information about the objects in a category. The objects are needed only as representatives for the start (tail) and goal (head) of the morphisms (arrows). We have the equivalent definition of a category as a collection of arrows with head and tail that can be composed associatively when the head of the first coincides with the tail of the second. Comparing with mathematical logic and set theory, this is to say that some properties follows from propositional logic (the Zermelo-Fraenkel axioms that we assume), while De Morgans's laws follows from the internal structure of the category of sets, $\cat{Sets}.$

\begin{definition} Two objects $C_1,C_2$ in a category $\cat C$ are called isomorphic, written $C_1\simeq C_2,$ if there are morphisms $\psi:C_1\rightarrow C_2$ and $\psi^{-1}:C_2\rightarrow C_1$ such that $\psi\circ\psi^{-1}=\id_{C_2}$ and $\psi^{-1}\circ\psi=\id_{C_1}.$
\end{definition}

\begin{definition} Let $\cat C,\cat D$ be two categories. A covariant functor $ F:\cat C\rightarrow\cat D$ between  the categories $\cat C$ and $\cat D$ consists of:
\begin{itemize} 
\item[1.] A map $F:\ob\cat C\rightarrow\ob\cat D.$
\item[2.] For each pair $C_1,C_2\in\ob\cat C$ a map $$F:\mor_{\cat C}(C_1,C_2)\rightarrow\mor_{\cat D}(F(C_1),F(C_2)).$$
\end{itemize}
Such that for each triple $C_1,C_2,C_3\in\ob\cat C$ and each $$(\phi,\psi)\in\mor_{\cat C}(C_1,C_2)\times\mor_{\cat C}(C_2,C_3),$$ 
$F(\phi)\circ F(\psi)=F(\phi\circ\psi).$ We also demand that $F(\id_C)=\id_{F(C)}.$
A functor $ F:\cat C\rightarrow\cat D^{\text{op}}$ is called a contravariant functor. 
\end{definition} 

A functor is then contravariant if it turns the arrows in their opposite directions. Given a category $\cat C,$ we define the identity functor $\id_{\cat C}:\cat C\rightarrow\cat C$ which is the identity both on objects and morphisms.

\begin{definition} Let $F:\cat C\rightarrow\cat D$ be a functor. If for all pairs $C_1,C_2\in\cat C$ the map $F:\mor_{\cat C}(C_1,C_2)\rightarrow\mor_{\cat D}(F(C_1),F(C_2))$ is surjective, it is called full. If it is injective, it is called faithful. If it is both surjective and injective, it is called fully faithful.
\end{definition}

\begin{definition} Given two functors $\operatorname{F},\operatorname{G}:\cat C\rightarrow\cat D.$ A natural transformation $T:F\rightarrow G$ consists of, for each $C\in\ob\cat C,$ a morphism $T(C):\operatorname{F}(C)\rightarrow\operatorname{G}(C)$ in $\cat D,$ such that if $\phi:C\rightarrow D$ is a morphism in $\cat C$ the following diagram commutes.
$$\xymatrix{F(C)\ar[d]_{F(\phi)}\ar[r]^{T(C)}&G(C)\ar[d]^{G(\phi)}\\F(D)\ar[r]_{T(D)}&G(D).}$$ Two functors $\operatorname{F},\operatorname{G}:\cat C\rightarrow\cat D$ are isomorphic, written $\operatorname{F}\simeq\operatorname{G},$ if there is a natural transformation $T:F\rightarrow G$ such that for each $C\in\ob\cat C,\ T(C):F(C)\rightarrow G(C)$ is an isomorphism. 
\end{definition}

\begin{definition} A functor $ F:\cat C\rightarrow\cat D$ is called an equivalence of categories if there exists a functor $\operatorname G:\cat D\rightarrow\cat C$ such that $F\circ G\simeq\id_{\cat D}$ and $G\circ F\simeq\id_{\cat C}.$
\end{definition}

Here are some examples of categories and functors.

\begin{example}
\begin{itemize}
\item[i)] The category $\cat{Sets}$ of sets, where the morphisms are just maps of sets.
\item[ii)] The category $\cat{Top}$ where the objects are topological spaces, and the morphisms are continuous maps. 
\item[iii)] $\cat{Grp}$ denotes the category where the objects are groups, and the morphisms are group homomorphisms.
\item[iv)] The category $\cat{Vec}_{\mathbb Q}$ of rational vector spaces with linear transformations as morphisms.
\item[v)] In the four last examples there is a functor $S:\cat C\rightarrow\cat{Sets}$ sending an object to its underlying set.
\end{itemize}
\end{example}

\begin{definition} A category $\cat C$ is locally small if all collections $\mor_{\cat C}(C_1,C_2)$ are sets for all $C_1,C_2\in\cat C.$ If, in addition, the collection $\ob C$ is a set, the category is called small. A category which is not small, is called large.
\end{definition}

We can define the category $\cat{Cat}$ of small categories, where the objects are small categories and the morphisms are functors. Then we have that $\cat{Cat}$ is a small category as well.  However, the category of categories would fail to be a category, because a large category is not necessarily an element in the category of categories by Russel's paradox.

All our examples up to now are small, but they are even easier.

\begin{definition} A category $\cat C$ is concrete if there exists a faithful functor $$S:\cat C\rightarrow\cat{Sets}.$$
\end{definition}

This is another way to say that a category $\cat C$ is concrete if its objects are sets, and its morphisms are contained in the set of maps of sets. All our examples up to now are concrete categories.

\subsection{Moduli objects}
In this section we will enter the structure of our objects in a small category. That is, we will assume that it is possible to talk about elements in objects. Thus we are stepping out of abstract category theory, and work more concretely. We could however replaced the concept of a nonempty set $X$ with the existence of a morphism from an initial object $P$ to  $X.$  
\begin{remark} To pin the importance: Some theorems on objects in a category is in need of the internal structure of the objects. In this section we enter the internal structure.
\end{remark}
Let $\cat C$ be a small category and $ F:\cat C\rightarrow\cat{Sets}$  a covariant functor. For each $C\in\ob\cat C,$  we have the covariant functor $\mor_{\cat{C}}(C,-):\cat C\rightarrow\cat{Sets}$ and we define the map of sets $$y_C:F(C)\rightarrow\mor_{\cat{Funct}}(\mor_{\cat C}(C,-),F),$$ from the set $F(C)$ to the set of natural transformations between the two functors, by the following: $y_C(\xi):\mor_{\cat C}(C,-)\rightarrow F$ is the natural transformation of functors given by $$y_C(\xi)(D)(\phi)=F(\phi)(\xi)\in F(D),\ \xi\in F(C).$$ The commutativity on the morphisms in $\cat C$ follows from the diagram $$\xymatrix{\mor_{\cat C}(C,D_1)\ar[r]^-{y_C(D_1)}\ar[d]_{\phi_\ast}&F(D_1)\ar[d]^{F(\phi)}\\\mor_{\cat C}(C,D_2)\ar[r]_-{y_C(D_2)}&F(D_2)}$$ where $\phi:D_1\rightarrow D_2$ is a morphism in $\cat C$ and $\phi_\ast=\mor_{\cat C}(D_1,\phi).$

\begin{proposition}{(Yoneda's Lemma)}\label{Yoneda} For every small category $\cat C$ and any functor $F:\cat C\rightarrow\sets,$ the Yoneda map 
$$y_C:F(C)\rightarrow\mor_{\cat{Cat}}(\mor_{\cat C}(C,-),F)$$ is a bijection for every $C\in\cat C.$
\end{proposition}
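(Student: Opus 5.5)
I will prove that $y_C$ is a bijection by writing down an explicit inverse
$$z_C:\mor(\mor_{\cat C}(C,-),F)\rightarrow F(C),\qquad z_C(T)=T(C)(\id_C)\in F(C),$$
and checking that both composites are identities. The only ingredients are the definition of a natural transformation, the functoriality of $F$ (in particular $F(\id_C)=\id_{F(C)}$), and the identity axiom of the category. Before starting, I note that $y_C(\xi)$ really is natural. For $\phi:D_1\rightarrow D_2$ and $\psi\in\mor_{\cat C}(C,D_1)$, we have $F(\phi)(F(\psi)(\xi))=F(\phi\circ\psi)(\xi)$. This is exactly the commutativity of the square displayed before the proposition, using that $\phi_\ast(\psi)=\phi\circ\psi$.

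\emph{First composite.} For $\xi\in F(C)$ I compute
$$z_C(y_C(\xi))=y_C(\xi)(C)(\id_C)=F(\id_C)(\xi)=\xi.$$
Hence $z_C\circ y_C=\id_{F(C)}$, and in particular $y_C$ is injective.

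\emph{Second composite.} This is the step carrying the real content, since a natural transformation must be recovered from a single element. Let $T:\mor_{\cat C}(C,-)\rightarrow F$ be natural and set $\xi=T(C)(\id_C)$. I must show that $y_C(\xi)(D)(\phi)=T(D)(\phi)$ for every $D$ and every $\phi:C\rightarrow D$.

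I apply the naturality square of $T$ to the morphism $\phi:C\rightarrow D$ itself:
$$\xymatrix{\mor_{\cat C}(C,C)\ar[r]^-{T(C)}\ar[d]_{\phi_\ast}&F(C)\ar[d]^{F(\phi)}\\\mor_{\cat C}(C,D)\ar[r]_-{T(D)}&F(D).}$$
I then chase the element $\id_C$ around the square. Going down first gives $\phi_\ast(\id_C)=\phi\circ\id_C=\phi$, which is sent to $T(D)(\phi)$. Going right first gives $\xi$, which is sent to $F(\phi)(\xi)=y_C(\xi)(D)(\phi)$. Commutativity of the square makes these equal, so $y_C(z_C(T))=T$ componentwise, and therefore as natural transformations. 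This shows $y_C\circ z_C=\id$.

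The main obstacle is therefore only the idea of evaluating the naturality square at the universal element $\id_C$; once that choice is made, everything else is bookkeeping. Smallness of $\cat C$ is used only to make the collection of natural transformations a set, so that $y_C$ is a map of sets. I would remark on this without further argument.
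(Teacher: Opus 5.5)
Your proof is correct and takes the same route as the paper, which also inverts $y_C$ by sending a natural transformation $T$ to $T(C)(\id_C)$ and then asserts $y_C(T(C)(\id_C))=T$. You also supply what the paper leaves implicit: the identity $z_C\circ y_C=\id_{F(C)}$ (injectivity), and the chase of $\id_C$ around the naturality square that justifies the other composite. You orient the transformation correctly as $\mor_{\cat C}(C,-)\rightarrow F$.
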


\begin{proof} We construct an inverse natural transformation: Assume that $$\operatorname{Y}_C: F\rightarrow\mor(C,-)$$ is a natural transformation of functors. Then $Y_C(C)(\id)=\xi\in F(C)$ and we see that $y_C(\xi)=\operatorname Y_C.$
\end{proof}

\begin{definition} Let $\cat C$ be a small  category and $ F:\cat C\rightarrow\cat{Sets}$  a covariant functor. An object $M\in\ob\cat C$ together with an element $\xi\in F(M)$ represents $ F,$ and $ F$ is representable by $(M,\xi),$ if  the Yoneda map $y_M(\xi):\mor_{\cat C}(C,-)\rightarrow F$ is an isomorphism of functors. 
\end{definition}

\begin{example} Let $\cat C$ be a small category and fix an object $P\in\ob\cat C.$ Then $\mor_{\cat C}(P,-)$ is a covariant functor, and for every $C\in\ob\cat C$ we have a bijection $$\mor_{\cat C}(P,C)\rightarrow\mor_{\cat{Funct}}(\mor_{\cat C}(C,-),\operatorname \mor_{\cat C}(P,-))$$ by Yoneda's Lemma \ref{Yoneda}. Letting $\xi\in\mor_{\cat C}(P,C),$ the Yoneda morphism $y_C$ is given for each $D$ as $$y_C(\xi)(D):\mor_{\cat C}(C,D)\rightarrow\mor_{\cat C}(P,D)$$ by sending $\phi\in\mor_{\cat C}(C,D)$ to $\phi\circ\xi\in\mor_{\cat C}(P,D).$
\end{example}

Our next definition is in need of an explanation. We consider a small category $\cat C,$ and let $F:\cat C\rightarrow\cat{Sets}$ be a covariant functor represented by $(M,\xi),$ i.e., for each object $C$ of $\cat C,$ we have a natural bijection
$$\mor_{\cat C}(M,C)\tilde\longrightarrow F(C).$$ Each element $f\in F(C)$ corresponds to a morphism $\phi_f:M\rightarrow C,$ which we can call a $C$-point of $M,$ and we can say that $M$ parametrizes the elements in $F(C).$ If $D$ is another object containing a $C$-point $\phi:D\rightarrow C,$ we have the induced diagram $$\xymatrix{\mor_{\cat C}(M,D)\ar[r]\ar[d]&F(D)\ar[d]\\\mor_{\cat C}(M,C)\ar[r]&F(C),}$$ and that is to say that there exists a morphism $\psi:M\rightarrow D$ sending the $C$-point $\phi$ in $D$ to the $C$-point $\phi\circ\psi$ in $M.$

\begin{definition} Let $\cat C$ be a small category,  and let $ F:\cat C\rightarrow\cat{Sets}$ be a covariant functor. If $(M,\xi)$ represents $ F$ we say that $(M,\xi)$ is a moduli object for the set $F(P)$ for all $P\in\ob\cat C.$ 
\end{definition}

\begin{example}
Let $T$ be an equilateral triangle in $\mathbb Q^2$ and let $S_3$ be the set of linear isometries $S:\mathbb Q^2\rightarrow\mathbb Q^2$ which satisfies $S(T)=T.$ We consider the free group $\langle x\rangle$ in one indeterminate and look for a category $\cat C$ which contains $\langle x\rangle$ as an object, and such that there is an object $M\in\ob\cat C$ which is a moduli object for $S_3.$ This says $\mor_{\cat C}(\langle x\rangle,M)=S_3.$ If we require that $\cat C$ is concrete, and assume that for all $\phi:\langle x\rangle\rightarrow S_3$ we should have $\phi(x^n)=\phi(x)^n$ for all $n,$ the only possibility is to let $\cat C=\cat{Gr},$ the category of groups, and $M=\operatorname S_3,$ the permutation group on three elements. 
\end{example}

\begin{example} We define the category of vector spaces $\cat{Vec}_{\mathbb Q}$ over $\mathbb Q$ as the concrete category containing $\mathbb Q$ as an object, and which contains an object $V$ representing $\mor_{\cat{Vec}_{\mathbb Q}}(-,\mathbb Q).$ Here $\mor_{\cat{Vec}_{\mathbb Q}}(\mathbb Q,\mathbb Q)\simeq\mathbb Q$ has to be the set of linear homomorphisms, and  this determines the category.
\end{example}

\begin{example} The category $\cat{Top}$ of topological spaces is the category containing a moduli for the metric space $\mathbb Q,$ determined by the demand on continuous functions $f:\mathbb Q\rightarrow\mathbb Q.$
\end{example}

\subsection{Universal Properties}

When we work in a fixed small category $\cat C$, we often define certain objects by universal properties. This usually means  that a couple $(M,\xi)$ represents a functor $F:\cat C\rightarrow\cat{Sets}.$ 

\begin{lemma} Let $F:\cat C\rightarrow\cat{Sets}$ be a functor from a small category into $\cat{Sets}.$ If $(M,\xi)$ and $(N,\phi)$ are two representing objects, there exist an isomorphism $\iota:M\rightarrow N$ such that $F(\iota)(\xi)=\psi.$
\end{lemma}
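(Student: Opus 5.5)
The plan is the standard uniqueness argument for representing objects, using only Yoneda's Lemma \ref{Yoneda} and the definition of representability. (The statement has a typo: the element of $F(N)$ should be one symbol throughout; we write it as $\psi\in F(N)$, so the claim is $F(\iota)(\xi)=\psi$.)

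First I extract the two morphisms. Since $(M,\xi)$ represents $F$, the map
$$y_M(\xi)(C):\mor_{\cat C}(M,C)\rightarrow F(C),\quad \phi\mapsto F(\phi)(\xi),$$
is a bijection for every $C$. Taking $C=N$ gives a unique $\iota:M\rightarrow N$ with $F(\iota)(\xi)=\psi$. Symmetrically, since $(N,\psi)$ represents $F$, taking $C=M$ gives a unique $\kappa:N\rightarrow M$ with $F(\kappa)(\psi)=\xi$.

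Next I show that $\kappa$ and $\iota$ are inverse to each other, using functoriality of $F$ and the uniqueness part of the bijection. We have
$$F(\kappa\circ\iota)(\xi)=F(\kappa)(F(\iota)(\xi))=F(\kappa)(\psi)=\xi .$$
But also $F(\id_M)(\xi)=\xi$. Injectivity of $y_M(\xi)(M):\mor_{\cat C}(M,M)\rightarrow F(M)$ then forces $\kappa\circ\iota=\id_M$. The same argument with the roles exchanged, using injectivity of $y_N(\psi)(N)$, gives $\iota\circ\kappa=\id_N$. Hence $\iota$ is an isomorphism with $\iota^{-1}=\kappa$, and by construction $F(\iota)(\xi)=\psi$.

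The only real obstacle is bookkeeping with the composition convention. The paper writes the composition map with arguments in the diagrammatic order, while its functor axiom reads $F(\phi)\circ F(\psi)=F(\phi\circ\psi)$. I would fix the usual convention, in which $\kappa\circ\iota$ means "first $\iota$, then $\kappa$", and use only covariance, $F(\kappa\circ\iota)=F(\kappa)\circ F(\iota)$, together with $F(\id)=\id$. No other input is needed beyond the bijectivity supplied by the definition of representing object, which Yoneda's Lemma \ref{Yoneda} identifies with the natural transformation $y_M(\xi)$.
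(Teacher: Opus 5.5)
Your proof is correct and follows the same route as the paper: you obtain $\iota$ from the bijection $y_M(\xi)(N)$ and a candidate inverse $\kappa$ from $y_N(\psi)(M)$. You also supply the step the paper only asserts, namely that $\kappa\circ\iota=\id_M$ and $\iota\circ\kappa=\id_N$ follow from injectivity of the components $y_M(\xi)(M)$ and $y_N(\psi)(N)$, which is what the paper's identifications $F(M)\simeq\mor_{\cat C}(M,M)$ and $F(N)\simeq\mor_{\cat C}(N,N)$ point to.
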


\begin{proof}
We have the two bijections $$y_M(\xi)(N):\mor_{\cat C}(M,N)\rightarrow F(N)\simeq\mor_{\cat C}(N,N)$$ and
$$y_N(\phi)(M):\mor_{\cat C}(N,M)\rightarrow F(M)\simeq\mor_{\cat C}(M,M).$$ The first proposition says that there is a morphism $\iota:M\rightarrow N$ such that $F(\iota)(\xi)=\psi,$ the seoond says that there exists an inverse, proving the statement.
\end{proof}

\begin{example}{(Categorical product).}
Let $C_1$ and $C_2$ be objects in a category $\cat C.$ The categorical product of $C_1$ and $C_2$ is an object $C_1\prod C_2$ representing the functor $F(C_1,C_2):\cat C\rightarrow\cat{Sets}$ given by $$F(C_1,C_2)(P)=\mor_{\cat C}(P,C_1)\times\mor_{\cat C}(P,C_2).$$
The more common way to say this is that the product $C_1\prod C_2$ is an object determined by the universal property that there there exists morphisms $$\pi_1:C_1\prod C_2\rightarrow C_1,\ \pi_2:C_1\prod C_2\rightarrow C_2,$$ and such that if $Y$ is another object with morphisms $$p_1:Y\rightarrow C_1,\ p_2:Y\rightarrow C_2,$$ then there exists a unique morphism $Y\rightarrow C_1\prod C_2$ making the following diagram commutative:
$$\xymatrix{&Y\ar[dl]_{p_1}\ar[dr]^{p_2}\ar[d]&\\C_1&\ar[l]^{\pi_1}C_1\prod C_2\ar[r]_{\pi_2}&C_2.}$$
\end{example}

\begin{example}{(Product and Coproduct in $\cat{Sets}$).}
Let $A$ and $B$ be sets. Then $A\prod B$ fulfil the diagram $$\xymatrix{X\ar@{-->}[r]^{\exists !}\ar[d]\ar[dr]&A\prod B\ar[d]\ar[dl]\\A&B.}$$ Then we see that the ordinary Cartesian product will do the job. 
 The coproduct of $A$ and $B$ fulfils the diagram
 
 $$\xymatrix{X&A\coprod B\ar@{-->}[l]^{\exists !}\\A\ar[ur]_{i_1}\ar[u]_{j_1}&B\ar[u]_{i_2}\ar[ul]_{j_2}.}$$
 
 We see that we have to choose $b\in B$ to  construct a map $i_1^b:A\rightarrow A\coprod B,\ i_1^b(a)=(a,b)$ and likewise for $i_2,$ which says that there is no unique map $A\coprod B\rightarrow X.$ To prove that a coproduct exists, put $$A\coprod B=(A\times\{0\})\cup(B\times\{1\}),$$ and the unique mapping $A\coprod B\rightarrow X$ must be $\phi((a,0))=j_1(a)$ and $\phi((b,1))=j_2(b)).$
\end{example}

\begin{example}{(Fibred product).} Let $\cat C$ be a category and $U\in\ob\cat C.$ For two morphisms $$X\overset f\rightarrow U,\ Y\overset g\rightarrow U,$$ their fibred product, if it exists, is an object $X\times_U Y$ with morphisms $\phi:X\times_U Y\rightarrow X$ and $\psi:X\times_U Y\rightarrow Y$ such that $f\circ\phi=g\circ\psi,$ and such that if $P$ is any other object with morphisms $\phi':P \rightarrow X$ and $\psi':P\rightarrow Y$ with $f\circ\phi'=g\circ\psi',$ then there exists a unique morphism $\xi:P\rightarrow X\times_U Y$ such that $\phi\circ\xi=\phi'$ and $\psi\circ\xi=\psi'.$

$$\xymatrix{P\ar@{-->}[rr]^\xi\ar[dr]^{\phi'}\ar[drrr]^{\psi'}&&X\times_U Y\ar[dl]^\phi\ar[dr]^\psi&\\&X\ar[dr]^f&&Y\ar[dl]^g\\&&U.&}$$

\end{example}
 
\begin{example}{(Projective (Inverse) and Inductive (Direct) Limits).}
A partially ordered set $(D,\leq)$ is directed if for all $a,b\in P$ there exists an element $c\in D$ such that both $a\leq c\land b\leq c.$  A directed partially ordered set $D$ then defines a category $\cat D$ where $\ob\cat D=D$ and $$\mor_{\cat D}(d_1,d_2)=\begin{cases}\{\leq\},\text{if }d_1\leq d_2,\\\emptyset\text{ otherwise.}\end{cases}$$ Let $\cat C$ be a small category. A projective system in $\cat C$ over $\cat D$  is a contravariant functor $F:\cat D\rightarrow\cat C.$ For an object $C\in\cat C$ we will call an element $\phi\in\prod_{\cat D}\mor_{\cat C}(C,F(D))$ commuting if for all $i\geq j\geq k$ in $\cat D$ we have that $\phi_k=\phi_i\circ F(i\geq j)\circ F(j\geq k).$
Let $F:\cat D\rightarrow\cat C$ be a projective system and for each $C\in\ob\cat C$ consider the set $$P(C)=\{\phi_C\in\underset{D}\prod\mor_{\cat C}(C,F(D))|\phi_C\text{ is commuting}\}.$$ This defines a contravariant functor $P:\cat C\rightarrow\cat{Sets},$ and a representing object $\underset{\underset{d\in D}\longleftarrow}\lim P(d)\in\ob\cat C$ is called the projective limit of $F.$ In the traditional language, this translates to: The projective limit of $F$ over $D$ consists of an object $\underset{\underset{d\in D}\longleftarrow}\lim P(d)\in\ob\cat C$ satisfying the following universal property: There exists a morphism $\phi_d:\underset{\underset{d\in D}\longleftarrow}\lim P(d)\rightarrow P(d)$  for each $d\in D$ commuting with the morphisms in $\cat D,$ such that if $(X,(\psi_d)_{d\in D})$ is another such object, there exists a unique morphism $\phi:X\rightarrow\underset{\underset{d\in D}\longleftarrow}\lim P(d)$ such that $\phi_d\circ\phi=\psi_d$ for each $d\in D.$

The inductive limit is defined by considering the covariant functor $$I(C)=\{\phi_C\in\underset{D}\prod\mor_{\cat C}(F(D),C)|\phi_C\text{ is commuting}\}$$ which is represented by the inductive limit of $F$ over $\cat D$ by $\underset{\underset{d\in D}\longrightarrow}\lim\ I(d).$ The traditional definition is to turn all the arrows in the definition of projective limits.
\end{example}

\begin{definition} Let $\cat C$ be a category. If all inductive limits exists, $\cat C$ is called complete. If all projective limits exists, it is called cocomplete. If all fibred products exists, it is called Cartesian closed.
\end{definition}

\subsection{Sheaves on Topological spaces}

The topology on a topological space $X$ defines a category $\cat{Top}(X)$ where the objects are the open sets, and the morphisms are the inclusions of sets. We will study functors $F:\cat{Top}(X)\rightarrow\cat C$ where $\cat C$ is of a particular kind.

A category $\cat C$ is called concrete if there exists a faithful functor $S:\cat C\rightarrow\cat{Sets}.$ We will only consider concrete categories $\cat C$  which contains at least one object $P$ such that $\mor_{\cat C}(P,C)=S(C).$ Examples of such categories are,

\begin{itemize}
\item[(1)] $\cat{Grps},$ the category of groups which contains the free group on one element $P=\langle x\rangle$ for which the group homomorphisms $P\rightarrow G$ is in one to one correspondence with the elements in the group $G,$ 
\item[(2)] $\cat{Rings},$ the category of rings with unit, which contains the free polynomial ring $P=\mathbb Z[x]$ in one variable over $\mathbb Z$ for which the ring homomorphisms $P\rightarrow R$ is in one to one correspondence with the elements in the ring $R.$
\end{itemize}

For such categories, a contravariant functor $F:\cat{Top}(X)\rightarrow\cat C$ is called a presheaf of $\cat C$-objects on $X.$  With the assumption on the category $\cat C$ we can formulate Definition II.(1.2), \cite{HH77}, in Hartshorne the following way

\begin{definition} A presheaf $\mathcal F$ of groups on $X$ is called a sheaf if the following two conditions hold:
\begin{itemize}
\item[(1)] If $U$ is an open set, if $\{V_i\}_{i\in I}$ is an open covering of $U,$ and if $s:P\rightarrow\mathcal F(U)$ satisfies $s|_{V_i}=0$ for all $i,$ then $s=0$;
\item[(2)] If $U$ is an open set, if $\{V_i\}_{i\in I}$ is an open covering of $U,$ and if we have $s_i:P\rightarrow\mathcal F(V_i)$ for each $i\in I,$ with the property that for each $i,j,\ s_i|_{V_i\cap V_j}=\ s_j|_{V_i\cap V_j},$ there is an $s:P\rightarrow \mathcal F(U)$ such that $s|_{V_i}=s_i$ for each $i\in I.$
\end{itemize}
\end{definition}

\begin{lemma}\label{asssheaf} Let $F$ be a presheaf on $X.$ Then there exists a sheaf $\mathcal F$ on $X$ with a morphism $\theta:F\rightarrow\mathcal F$ such that if $\mathcal G$ is another sheaf with a morphism $\phi:F\rightarrow\mathcal G,$ there is a unique morphism $\psi:\mathcal F\rightarrow\mathcal G$ such that $\phi=\psi\circ\theta.$
\end{lemma}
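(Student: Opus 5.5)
We construct the associated sheaf explicitly via germs, as in Hartshorne II.1.2. We work with presheaves of groups (or of objects in a concrete category $\cat C$ with a free object $P$, so that sections $s:P\rightarrow F(U)$ are the same as elements of $S(F(U))$). The argument has four steps.

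\textbf{Step 1: stalks.} For $x\in X$, the open neighbourhoods of $x$ ordered by reverse inclusion form a directed set. We define the stalk
$$F_x=\underset{\underset{x\in U}\longrightarrow}\lim F(U)$$
as an inductive limit, in the sense of the example on projective and inductive limits. Concretely, $F_x$ is the set of pairs $(U,s)$ modulo the relation that two pairs agree on some smaller neighbourhood. The group structure is inherited because the colimit is filtered.

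\textbf{Step 2: the sheaf $\mathcal F$ and the map $\theta$.} For open $U$, let $\mathcal F(U)$ be the set of functions $s:U\rightarrow\coprod_{x\in U}F_x$ with $s(x)\in F_x$ that are locally induced. This means every $x\in U$ has a neighbourhood $V\subseteq U$ and some $t\in F(V)$ with $s(y)=t_y$ for all $y\in V$. Restriction is restriction of functions.

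The sheaf axioms hold because the defining condition is local:
\begin{itemize}
\item[(1)] A function vanishing on every $V_i$ of a cover vanishes on $U$.
\item[(2)] Compatible functions $s_i$ on $V_i$ glue to a function on $U$, and it is still locally induced.
\end{itemize}
Define $\theta(U):F(U)\rightarrow\mathcal F(U)$ by $t\mapsto(x\mapsto t_x)$. It is compatible with restrictions, so it is a morphism of presheaves.

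\textbf{Step 3: existence and uniqueness of $\psi$.} Let $\phi:F\rightarrow\mathcal G$ with $\mathcal G$ a sheaf. Take $s\in\mathcal F(U)$ and choose a cover $\{V_i\}$ of $U$ with $t_i\in F(V_i)$ inducing $s$ on $V_i$. Consider the sections $\phi(t_i)\in\mathcal G(V_i)$.
\begin{itemize}
\item On $V_i\cap V_j$, the elements $t_i$ and $t_j$ have the same germs at every point. So each point has a neighbourhood $W$ where $t_i|_W=t_j|_W$ in $F(W)$, hence $\phi(t_i)|_W=\phi(t_j)|_W$.
\item By axiom (1) for $\mathcal G$, this gives $\phi(t_i)=\phi(t_j)$ on $V_i\cap V_j$.
\item By axiom (2) they glue to a section $\psi(s)\in\mathcal G(U)$.
\end{itemize}
Independence from the choice of cover and of the $t_i$ is the same argument applied to a common refinement, again using axiom (1). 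Compatibility of $\psi$ with restrictions and the group operations, and the identity $\psi\circ\theta=\phi$ (take the trivial cover), are routine.

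For uniqueness, let $\psi'$ be another morphism with $\psi'\circ\theta=\phi$. Then $\psi'(s)|_{V_i}=\psi'(\theta(t_i))=\phi(t_i)$ for all $i$, so $\psi'(s)=\psi(s)$ by axiom (1) for $\mathcal G$.

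\textbf{Main obstacle.} The main obstacle is the well-definedness in Step 3: germ equality only gives local agreement, never agreement on the whole intersection. Both sheaf axioms of $\mathcal G$ are needed to handle this, and this is where I expect most of the care to go.
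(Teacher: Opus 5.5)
Your proof is correct. It is the standard germ construction (Hartshorne, Prop.~II.1.2). Your Step 3 is where the real content lies: you upgrade germwise agreement on $V_i\cap V_j$ to $\phi(t_i)=\phi(t_j)$ there, using separation in $\mathcal G$, before gluing.

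The paper takes a different, point-free route. It never forms stalks. Instead it sets $\mathcal F(U)=\underset{\underset{V\subsetneq U}\longleftarrow}\lim\ F(V)$, the projective limit over the proper open subsets of $U$, with $\theta$ implicitly the canonical restriction map. It then reads off both sheaf axioms and the universal property from the universal property of projective limits. This buys brevity, and it matches the paper's later definition of the associated sheaf on a site, where there are no points.

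As written, however, the paper's construction does not work, so your route is the one that actually proves the lemma. Two examples show this.
\begin{itemize}
\item Take $X=\{x\}$ and the sheaf with $F(X)=\mathbb Z$, $F(\emptyset)=0$. The only proper open subset of $X$ is $\emptyset$, so $\mathcal F(X)=0$. Then no $\psi$ can satisfy $\psi\circ\theta=\id_F$. Also $\mathcal F_x=0\neq F_x$, which contradicts Lemma \ref{stalklemma}.
\item Take the constant presheaf $\mathbb Z$ (with $F(\emptyset)=0$) on the discrete space $\{a,b\}$. One gets $\mathcal F(\{a,b\})\cong\mathbb Z^2$ but $\mathcal F(\{a\})=\mathcal F(\{b\})=0$. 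So $(1,0)$ is nonzero yet restricts to $0$ on the cover $\{\{a\},\{b\}\}$, violating separation.
\end{itemize}
The underlying problem is that a single limit of values of $F$ cannot identify sections that merely agree locally. Nor can it produce sections from gluing data on an arbitrary cover. Some colimit step is unavoidable, and your stalks provide it. The point-free substitute is Grothendieck's plus construction, a colimit over covers, applied twice.

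One minor caveat concerns your parenthetical extension to an arbitrary concrete $\cat C$ with object $P$. It needs $\mor_{\cat C}(P,-)$ to preserve filtered colimits, and it needs the locally induced functions to carry a $\cat C$-structure. Both hold for groups and rings, which is all the lemma needs.
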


\begin{proof} We prove that the presheaf $\mathcal F$ defined by $\mathcal F(U)=\underset{\underset{V\subsetneq U}\longleftarrow}\lim\ F(V)$ is a sheaf.

\begin{itemize} \item[(1)] If $s:P\rightarrow\mathcal F(U)$ restricts to $s|_{V_i}=0$ for each $i\in I,$ then $s$ has to be equal to $0:P\rightarrow\mathcal F(U)$ by the universal property of projective limits.
\item[(2)] The condition says that for each subset $V\subsetneq U$ we have a morphism $s_V:P\rightarrow F(V)\rightarrow\mathcal F(V)$ commuting in all chains of open subsets. Then there exists a unique morphism $s:P\rightarrow\mathcal F(U)$ such that $s|_V=s_V.$ 
\end{itemize}
By the definition of projective limits, it follows that the sheaf $\mathcal F$ has the proposed condition.
\end{proof}

\begin{definition} Let $X$ be a topological space and $F$ a presheaf on $X.$ Then the sheaf $\mathcal F$ on $X$ in Lemma \ref{asssheaf}, is called the sheaf associated to the presheaf $F.$
\end{definition}

\begin{definition} Let $F$ be a presheaf on a topological space $X.$ Then for $x\in X$ we define the stalk of $F$ in $x$ as $F_x=\underset{\underset{x\in U}\longrightarrow}\lim\ F(U).$
\end{definition}

\begin{lemma}\label{stalklemma} Let $F$ be a presheaf on $X$ and let $\mathcal F$ be the associated sheaf. Then  for all $x\in X,$ $\mathcal F_x=F_x.$
\end{lemma}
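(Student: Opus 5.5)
The plan is to prove that the canonical morphism $\theta_x:F_x\rightarrow\mathcal F_x$ induced by $\theta:F\rightarrow\mathcal F$ of Lemma \ref{asssheaf} is a bijection, working with elements via morphisms $P\rightarrow -$ as in the definition of sheaves. The map $\theta_x$ comes from the universal property of the inductive limit $F_x=\underset{\underset{x\in U}\longrightarrow}\lim\ F(U)$. The maps $\theta(U):F(U)\rightarrow\mathcal F(U)\rightarrow\mathcal F_x$ commute with restrictions, since $\theta$ is a morphism of presheaves. Hence they factor uniquely through $F_x$. I will then show injectivity and surjectivity separately, using the standard description of germs: every element of $F_x$ (resp.\ $\mathcal F_x$) is represented by a pair $(U,s)$ with $x\in U$ and $s:P\rightarrow F(U)$ (resp.\ $\mathcal F(U)$). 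Two pairs give the same germ iff they agree on some smaller neighbourhood of $x$. This uses that the open neighbourhoods of $x$ form a directed set under reverse inclusion.

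For \emph{surjectivity}, take a germ in $\mathcal F_x$ represented by $t:P\rightarrow\mathcal F(U)$. I want to find a neighbourhood $V\ni x$, $V\subseteq U$, and $s:P\rightarrow F(V)$ with $\theta(V)(s)=t|_V$. The idea is that a section of the associated sheaf is locally a section of $F$ (up to local identification). So $t$ restricts near $x$ to something in the image of $\theta$, and the germs agree. For \emph{injectivity}, suppose $s,s':P\rightarrow F(U)$ satisfy $\theta(U)(s)$ and $\theta(U)(s')$ having the same germ. Then they agree in $\mathcal F(V)$ for some $V\ni x$. I will use the construction of $\mathcal F$ to deduce that $s|_W=s'|_W$ on some smaller $W\ni x$, so $s$ and $s'$ define the same element of $F_x$.

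The main obstacle is that both steps depend on the explicit construction of $\mathcal F$. The universal property in Lemma \ref{asssheaf} alone does not give the local description of sections of $\mathcal F$, and the construction in its proof, $\mathcal F(U)=\underset{\underset{V\subsetneq U}\longleftarrow}\lim\ F(V)$, does not obviously have the property that sections are locally in the image of $\theta$. My first attempt will be a cofinality argument. Among neighbourhoods of $x$, the proper open subsets $V\subsetneq U$ with $x\in V$ are cofinal in the directed system defining the stalk. Elements of $\mathcal F(U)$ are compatible families $(s_V)$, and the germ of such a family at $x$ should equal the germ of any component $s_V$ with $x\in V$. That would give an inverse map $\mathcal F_x\rightarrow F_x$, $[(s_V)]\mapsto[s_V]$, which I would then check is well defined and inverse to $\theta_x$.

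If this cofinality argument breaks down (for instance if $x$ has no proper open neighbourhood inside $U$), I will fall back on the standard construction of the sheafification via compatible germs, as in Hartshorne II.1.2. There $\mathcal F(U)$ consists of functions $s:U\rightarrow\coprod_{x\in U}F_x$ that are locally given by sections of $F$. With that construction both surjectivity and injectivity follow directly, and I will then use the uniqueness part of Lemma \ref{asssheaf} to transport the conclusion to $\mathcal F$.
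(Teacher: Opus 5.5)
Your fallback argument is correct and complete. It is a genuinely different, more concrete route than the paper's, whose entire proof is the assertion that $F_x\simeq\mathcal F_x$ ``by the universal property''.

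The natural way to make the paper's sentence rigorous needs no model at all. For an object $S$ let $x_\ast S$ be the skyscraper sheaf: $S$ on opens containing $x$, a terminal object on the others. By the universal property of the direct limit, morphisms $F_x\rightarrow S$ are the same as presheaf morphisms $F\rightarrow x_\ast S$. Since $x_\ast S$ is a sheaf, Lemma \ref{asssheaf} identifies these with morphisms $\mathcal F\rightarrow x_\ast S$, i.e.\ with morphisms $\mathcal F_x\rightarrow S$. The composite bijection is precomposition with $\theta_x$, so $\theta_x$ is an isomorphism by Yoneda.

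You instead use the universal property only to obtain an isomorphism $\psi:F^+\rightarrow\mathcal F$ with $\psi\circ\theta^+=\theta$ for Hartshorne's model $F^+$. Then $\theta_x=\psi_x\circ\theta^+_x$, and you check injectivity and surjectivity of $\theta^+_x$ on germs. The abstract route avoids the germ description of stalks, which needs the underlying-set functor to commute with filtered colimits (true for sets, groups and rings). Yours costs a model but gives the sharper local fact that sections of $\mathcal F$ are locally sections of $F$.

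You should commit to the fallback rather than treat it as a contingency. State that $\mathcal F$ means the sheaf characterized by the universal property in the \emph{statement} of Lemma \ref{asssheaf}.

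Your cofinality argument works exactly when $x$ has no smallest open neighbourhood. If $x$ has a smallest open neighbourhood $U_x$, the formula $\mathcal F(U)=\varprojlim_{V\subsetneq U}F(V)$ from the proof of Lemma \ref{asssheaf} gives $\mathcal F_x=\mathcal F(U_x)=\varprojlim_{V\subsetneq U_x}F(V)$. This only sees opens not containing $x$. For an isolated point it is $F(\emptyset)$ rather than $F(\{x\})=F_x$.

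So the lemma is false for that formula, and the formula does not even produce a sheaf. On a two-point discrete space $\{a,b\}$ with $F(\emptyset)$ a point, it assigns points to $\{a\}$ and $\{b\}$ but $F(\{a\})\times F(\{b\})$ to the whole space. Your transport by uniqueness therefore lands on the universally characterized sheaf, which is the only reading under which the statement holds.
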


\begin{proof} By the universal property there exists an isomorphism $F_x\simeq\mathcal F_x.$
\end{proof}

In the preprint \cite{S250} we give a categorical construction of ordinary schemes based on the theory of sheaves on topological spaces as considered in this subsection.

\section{Sites and Sheaves}\label{lecturechapter}

\subsection{Grothendieck Topology}
We follow [V07] verbatim (more or less).
\begin{definition} Let $\cat C$ be a category. A Grothendieck topology on $\cat C$ is the assignment to each object $U$ of $\cat C$ of a collection of sets of arrows $\{U_i\rightarrow U\},$ called coverings of $U,$ so that the following conditions are satisfied.
\begin{itemize}
\item[(i)] If $V\rightarrow U$ is an isomorphism, then the set $\{V\rightarrow U\}$ is a covering.
\item[(ii)] If $\{U_i\rightarrow U\}$ is a covering and $V\rightarrow U$ is any arrow, then the fibered products $\{U_i\times_U V\}$ exists, and the collection of projections $\{U_i\times_U V\rightarrow V\}$ is a covering.
\item[(iii)] If $\{U_i\rightarrow U\}$ is a covering, and for each index $i$ we have a covering  $\{V_{ij}\rightarrow U_i\}$ (here $j$ varies on a set depending on $i$), the collection of composites $\{V_{ij}\rightarrow U_i\rightarrow U\}$ is a covering of $U.$
\end{itemize}
A category with a Grothendieck topology is called a site.
\end{definition}

\subsection{Sheaves on Sites}

\begin{definition}\label{Sheafdef} Let $\cat C$ be a site, $F:\cat C^\text{op}\rightarrow\cat{Sets}$ a functor.
\begin{itemize}
\item[(i)] $F$ is separated if, given a covering $\{U_i\rightarrow U\}$ in $\cat C$ and two sections $a$ and $b$ in $F(U)$ whose pullbacks to each $F(U_i)$ coincide, it follows that $a=b.$
\item[(ii)] $F$ is a sheaf if the following condition is satisfied. Suppose that we are given a covering $\{U_i\rightarrow U\}$ in $\cat C,$ and a set of elements $a_i\in F(U_i).$ Denote by $\operatorname{pr}_1:U_i\times_U U_j\rightarrow U_i$ and $\operatorname{pr}_2:U_i\times_U U_j\rightarrow U_j$ the first and second projections respectively, and assume that $\operatorname{pr}_1^\ast(a_i)=\operatorname{pr}_2^\ast(a_j)\in F(U_i\times_U U_j)$ for all $i$ and $j.$ Then there is a unique section $a\in F(U)$ whose pullback to $F(U_i)$ is $a_i$ for all $i.$ If $F$ and $G$ are sheaves on a site $\cat C,$ a morphism of sheaves is simply a natural transformation of functors.
\end{itemize}
\end{definition}

We can strain an object through a sieve. Then it is properly strained, and straining it again will not effect the strained object.
\begin{definition} Let $U$ be an object of a category $\cat C.$ A sieve on $U$ is a subfunctor of $\mor_{\cat C}(-,U):\cat C^\text{op}\rightarrow\sets.$
\end{definition}

A presheaf $F$ on a site can be strained through a sieve to become a sheaf. Lemma \ref{Yoneda} (Yoneda's Lemma) says that $F(U)\overset\sim\rightarrow\mor(\mor(U,-), F).$ This defines a  presheaf $\mathcal F$ of a sieve, which is the sheaf $\mathcal F$ associated to the presheaf $F,$ and a presheaf is a sheaf if and only if $\mathcal F\simeq F.$ In the search for representing objects, or moduli, one can choose a sieve, which Vistoli call a cleavage, which is then a $2$ functor, giving rise to the $2$-category of stacks. 

We will use an equivalent and more general exposition where we apply our earlier construction by projective limits replacing the sieves.

Let $\cat C$ be a site. Then a sequence of morphisms $$V=\{V_i\rightarrow V_{i-1}\}_{i=n}^1=V_n\rightarrow V_{n-1}\rightarrow\cdots\rightarrow V_1\rightarrow V_0$$ is called a sequence of opens  if $V_i\rightarrow V_{i-1}$ is in the cover of $V_{i-1},\ n\leq i\leq 1.$ It is called a sequence of opens in $U$ if $V_0=U$; and when none of the composites $V_i\rightarrow V_0,\ i>0,$ is not an isomorphism, it is called proper.

\begin{definition}
Let $\cat C$ be a site, and let $F:\cat C^\text{op}\rightarrow\cat D$ be a functor (presheaf). We define the sheaf $\mathcal F$ of $\cat D$  associated to $F,$ by for each object $U$ in $\cat C,$ letting $\mathcal F(U)=\underset{\underset{V\rightarrow U}\leftarrow}\lim F(V),$ where the limit is taken over all proper sequences of opens in $U.$
\end{definition}

\begin{corollary} A functor $F:\cat C\rightarrow\cat D$ is a sheaf if and only if $F\simeq\mathcal F.$
\end{corollary}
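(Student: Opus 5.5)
The plan is to prove the two implications separately, in each case comparing $F(U)$ with the projective limit $\mathcal F(U)=\underset{\underset{V\rightarrow U}\leftarrow}\lim F(V)$ taken over proper sequences of opens in $U$. For each such sequence $V_n\rightarrow\cdots\rightarrow V_0=U$, applying $F$ to the composite $V_n\rightarrow U$ gives a restriction map $F(U)\rightarrow F(V_n)$. These maps are compatible along the transition morphisms of the indexing system, so the universal property of projective limits gives a canonical morphism $\theta_U:F(U)\rightarrow\mathcal F(U)$, natural in $U$. Both directions of the corollary will be phrased in terms of this natural transformation $\theta:F\rightarrow\mathcal F$.

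\textbf{The direction "$F$ is a sheaf $\Rightarrow$ $F\simeq\mathcal F$".} Here I show that each $\theta_U$ is a bijection, treating the case $\cat D=\sets$ as in Definition \ref{Sheafdef}.
\begin{itemize}
\item[(1)] \emph{Injectivity.} Take any covering $\{U_i\rightarrow U\}$. Each $U_i\rightarrow U$ is a length-one sequence of opens. If the covering is not proper, some member is an isomorphism, and injectivity is trivial. Otherwise two sections with the same image in $\mathcal F(U)$ have the same pullbacks to every $U_i$, so they are equal because $F$ is separated.
\item[(2)] \emph{Surjectivity.} An element of $\mathcal F(U)$ restricts to a compatible family $a_i\in F(U_i)$ over a covering. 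The compatibility conditions on $U_i\times_U U_j$ come from axiom (ii) of a Grothendieck topology. The point is that $U_i\times_U U_j\rightarrow U_i$ is again in a covering of $U_i$, so $U_i\times_U U_j\rightarrow U_i\rightarrow U$ is a sequence of opens appearing in the limit. The sheaf axiom then glues the $a_i$ to an $a\in F(U)$.
\item[(3)] \emph{Longer sequences.} Axiom (iii) says that a sequence $V_n\rightarrow\cdots\rightarrow U$ refines to a single covering of $U$ by composites. So by induction on $n$, the data on longer sequences is determined by the data on one-step coverings.
\end{itemize}

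\textbf{The converse "$F\simeq\mathcal F$ $\Rightarrow$ $F$ is a sheaf".} It suffices to show that $\mathcal F$ is always a sheaf; the sheaf property is invariant under isomorphism of functors. I would imitate the proof of Lemma \ref{asssheaf}.
\begin{itemize}
\item[(1)] \emph{Separation.} Sections of $\mathcal F(U)$ agreeing on a covering agree on every sequence of opens starting with a member of that covering. By axiom (ii), every sequence of opens in $U$ can be pulled back to pass through the covering, so the sections agree on the whole indexing system. They are therefore equal by the uniqueness in the universal property.
\item[(2)] \emph{Gluing.} Compatible $a_i\in\mathcal F(U_i)$ give elements on all sequences $V\rightarrow U_i\rightarrow U$. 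Using the fibred products $U_i\times_U U_j$, these assemble into a compatible family over all proper sequences in $U$. The universal property then yields the glued section.
\end{itemize}

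\textbf{Main obstacle.} I expect the hardest step to be the cofinality argument. One must check that the system of proper sequences of opens, which is not obviously directed, has the single coverings (together with the fibred products $U_i\times_U U_j$) as a cofinal subsystem. Only then does the projective limit reduce to the equalizer
$$\prod_i F(U_i)\rightrightarrows\prod_{i,j}F(U_i\times_U U_j)$$
appearing in Definition \ref{Sheafdef}. My first attempt would be to use axioms (ii) and (iii) to refine any two sequences to a common one, mimicking the directedness argument used for open subsets in Lemma \ref{asssheaf}. The exclusion of isomorphisms, the "proper" condition, needs separate handling, since by axiom (i) a trivial covering $\{V\rightarrow U\}$ by an isomorphism contributes nothing new.
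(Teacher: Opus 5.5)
\textbf{The proposal has genuine gaps, and they cannot be repaired: under the paper's definition of $\mathcal F$, both directions of the statement fail.}

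The paper gives no separate argument for this corollary. It is presented as a consequence of $\mathcal F$ being ``the associated sheaf'', i.e.\ of the Proposition that follows it together with a universal property as in Lemma \ref{asssheaf}. Your two halves rest on exactly these two facts, and both break at the point you set aside, the word \emph{proper}. On $\cat{Top}(X)$ every open inclusion is a member of some covering. So the proper sequences in $U$ end exactly at the proper open subsets, and $\mathcal F(U)=\varprojlim_{V\subsetneq U}F(V)$, which is the formula of Lemma \ref{asssheaf}. In general the limit defining $\mathcal F(U)$ has no index at $\id_U$. In your injectivity step you say that a covering containing an isomorphism makes injectivity trivial. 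The opposite is true: that member is invisible to $\mathcal F(U)$, and if $U$ admits no covering by non-isomorphisms, the objects indexing $\mathcal F(U)$ do not cover $U$ at all. For $X$ a point, the only proper sequences in $X$ end at $\emptyset$, so $\mathcal F(X)=F(\emptyset)=\{\ast\}$ for every sheaf $F$. The sheaf with $F(X)=\{0,1\}$ and $F(\emptyset)=\{\ast\}$ is therefore not isomorphic to $\mathcal F$. Your surjectivity step has the same defect, since it also needs a covering of $U$ by non-isomorphisms.

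The converse cannot be reduced to ``$\mathcal F$ is always a sheaf'' (the paper's Proposition), because that claim is false. In your separation step, $a|_{U_i}=b|_{U_i}$ in $\mathcal F(U_i)$ only gives agreement at sequences $W\rightarrow U_i\rightarrow U$ with $W\rightarrow U_i$ non-invertible. It never gives agreement at the length-one sequence $U_i\rightarrow U$. Moreover, recovering agreement at $V$ from agreement at all $V\times_U U_i$ would require $F$ to be separated, which is not assumed. Concretely, take $X=\{p,q\}$ discrete and the sheaf $F(W)=\{0,1\}^W$. Then $\mathcal F(\{p\})=\mathcal F(\{q\})=F(\emptyset)=\{\ast\}$, while $\mathcal F(X)=F(\{p\})\times F(\{q\})$ has four elements. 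So $\mathcal F$ is not even separated for the covering $\{\{p\},\{q\}\}$.

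The implication itself fails as well. On $\cat{Top}(\mathbb R)$ let $F(U)$ be the set of constant functions $U\rightarrow\{0,1\}$. Any two nonempty $V_1,V_2\subsetneq U$ are linked by $V_1\supseteq I_1\subseteq I_1\cup I_2\supseteq I_2\subseteq V_2$, with short open intervals $I_k$ chosen so that $I_1\cup I_2\neq U$. Hence a compatible family on the proper opens of $U$ is a single constant, and your $\theta:F\rightarrow\mathcal F$ is an isomorphism. Yet $F$ cannot glue $0$ on $(0,1)$ with $1$ on $(2,3)$. So no cofinality argument can complete the plan.

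What is true is the sieve form of Definition \ref{Sheafdef}: $F$ is a sheaf if and only if, for \emph{every} covering $\{U_i\rightarrow U\}$, the map from $F(U)$ to the limit over the sieve generated by that covering is bijective. That limit is your equalizer $\prod_i F(U_i)\rightrightarrows\prod_{i,j}F(U_i\times_U U_j)$. An associated sheaf additionally requires a colimit over refinements, as in the plus construction.
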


We have to prove that when $\cat D=\cat{Sets}$ then our definition agrees with Definition \ref{Sheafdef}.
\begin{proposition} When $\cat C$ is a site and $F:\cat C^\text{op}\rightarrow\cat{Sets}$ is a contravariant functor, then $\mathcal F$ is a sheaf of Sets.
\end{proposition}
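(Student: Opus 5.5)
The plan is to verify the two conditions of Definition \ref{Sheafdef} directly for $\mathcal F(U)=\underset{\underset{V\rightarrow U}\leftarrow}\lim F(V)$. In $\cat{Sets}$ this limit is concrete. An element of $\mathcal F(U)$ is a compatible family $(x_V)$ with $x_V\in F(V_n)$ for each proper sequence of opens $V=(V_n\rightarrow\cdots\rightarrow V_0=U)$. Compatibility means $x_{V'}$ is the pullback of $x_V$ whenever $V'$ refines $V$ by extending the sequence. So I first make the indexing category explicit and describe the restriction maps $\mathcal F(U)\rightarrow\mathcal F(U_i)$ for a covering arrow $U_i\rightarrow U$. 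A proper sequence in $U_i$ becomes a sequence in $U$ by appending $U_i\rightarrow U$. When $U_i\rightarrow U$ is not an isomorphism the result is proper, and we simply reindex. When it is an isomorphism, axiom (i) of a Grothendieck topology lets us identify the two index systems. This gives $\mathcal F$ the structure of a presheaf.

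\emph{Separatedness.} Let $a,b\in\mathcal F(U)$ agree in every $\mathcal F(U_i)$ for a covering $\{U_i\rightarrow U\}$. I must show $a_V=b_V$ for every proper sequence $V$ in $U$. The key step is cofinality. Given $V$ with first arrow $V_1\rightarrow U$, I form the fibred products $U_i\times_U V_1$, which exist and cover $V_1$ by axiom (ii). By axiom (iii), composites along coverings are again coverings. Hence every proper sequence in $U$ is dominated by sequences that factor through some $U_i$. Using uniqueness of the limit map, as in the proof of Lemma \ref{asssheaf}(1), the components of $a$ and $b$ are then determined by their restrictions to the $U_i$, so $a=b$.

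\emph{Gluing.} Given $a_i\in\mathcal F(U_i)$ with $\operatorname{pr}_1^\ast a_i=\operatorname{pr}_2^\ast a_j$ in $\mathcal F(U_i\times_U U_j)$, I would define the component of $a$ at a proper sequence $V$ in $U$ as follows. Refine $V$ through some $U_i$ as above, take the component of $a_i$, and argue independence of the choice of $i$ using the compatibility on $U_i\times_U U_j$. A common refinement through both $U_i$ and $U_j$ factors through the fibred product, by its universal property. The compatible family so obtained is an element of the projective limit, hence an element $a\in\mathcal F(U)$, by the universal property of projective limits. Uniqueness follows from separatedness.

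The main obstacle I expect is this cofinality and independence bookkeeping. The limit is indexed by \emph{sequences}, not arrows, and elements of $\mathcal F(U)$ carry components only at sequences. A component at a short sequence must therefore be recovered from components at longer, refined sequences. This requires that $F$ (or $\mathcal F$) already satisfies a descent property along the refining cover, which threatens circularity. I would first attempt it via the same ``universal property of projective limits'' argument used in Lemma \ref{asssheaf}. If that fails, I would restrict to showing the gluing at the level of the cofinal subsystem of sequences passing through the given cover and identify limits over cofinal subsystems.
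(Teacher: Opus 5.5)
\textbf{Verdict: there is a genuine gap, it is exactly the step you flag, and it cannot be closed.} In $\mathcal F(U)=\varprojlim_V F(V_n)$ the transition maps run from a sequence to its extensions. So components at refined sequences are images of components at shorter ones, never conversely.

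Your separatedness argument establishes that every proper sequence is dominated by one passing through some $U_i$. That is cofinality, which is what lets a \emph{colimit} be computed on a subsystem. A limit may be computed on a subsystem when the subsystem is initial. That requires in particular that every sequence receive a map from it, and a one-step sequence $V_1\to U$ not factoring through any $U_i$ receives none. So your fallback is unavailable.

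Deducing $a_V=b_V$ from agreement on refinements needs $F(V_1)\to\prod_i F(U_i\times_U V_1)$ to be injective, i.e.\ separatedness of $F$ itself. Your gluing step likewise needs descent for $F$. Moreover, with your restriction maps (appending $U_i\to U$), the component of $a$ at the length-one sequence $(U_i\to U)$, an element of $F(U_i)$, is recorded by no restriction $a|_{U_j}$ at all.

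\textbf{In fact the proposition is false for $\mathcal F$ as defined, even when $F$ is already a sheaf.} Let $X=\{p,q\}$ be discrete, with the site of its open sets and ordinary open coverings. Let $F(U)$ be the set of maps $U\to\{0,1\}$.
\begin{itemize}
\item A proper sequence in $\{p\}$ has $V_k=\emptyset$ for all $k\geq 1$, and $F(\emptyset)$ is a point. So $\mathcal F(\{p\})$ and $\mathcal F(\{q\})$ are points.
\item For $g\in F(X)$, the family of pullbacks $(g|_{V_n})_V$ is compatible over the proper sequences in $X$. Its components at $\{p\}\subset X$ and $\{q\}\subset X$ recover $g$, so $\mathcal F(X)$ has at least four elements.
\item Hence $\mathcal F$ is not separated for the covering $\{\{p\}\to X,\{q\}\to X\}$.
\end{itemize}
If instead the trivial sequence is counted as proper, then $\mathcal F\simeq F$ for every presheaf, and the claim would make every presheaf a sheaf.

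The paper's one-line proof does not escape this. It passes from the $a_{ij}$ over $U_i\times_U U_j$ to a point of $\mathcal F(U)$ by ``the universal property of projective limits'' without exhibiting a cone over the proper sequences in $U$. It also never addresses uniqueness.

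A working associated-sheaf construction must be built from colimits over refinements of coverings. An example is the plus construction $F^+(U)=\operatorname{colim}\check H^0(\{U_i\to U\},F)$, applied twice.
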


\begin{proof} Suppose we are given a covering $\{U_i\rightarrow U\}$ in $\cat C,$ and a set of elements $a_i\in \mathcal F(U_i)$ such that $\operatorname{pr}_1^\ast(a_i)=\operatorname{pr}_2^\ast(a_j)=:a_{ij}\in F(U_i\times_U U_j).$ Define $$f_{ij}:\{x\}\rightarrow\mathcal F(U_i\times_U U_j)$$  by $f_{ij}(x)=a_{ij}.$ By the universal property of projective limits, there is a map $f:\{x\}\rightarrow \mathcal F(U),$ and $a:=f(x)\in\mathcal F(U)$ pulls back to $a_i$ for all $i.$
\end{proof}

\begin{example}\label{Affex} Let $A$ be a commutative ring, and let $U=\spec A$ be its set of prime ideals. We give $U$ the topology generated by the sub-basis $\{D(f)\}_{f\in A}$ where $D(f)=\{\mathfrak p\in X|f\notin\mathfrak p\}.$ A ring homomorphism $\phi:A\rightarrow B$ gives a continuous function  $\phi^\ast:\spec B\rightarrow\spec A$ so that we can let the objects of the category $\cat{Aff}$ of affine schemes be the  $U=\spec A$ for rings $A,$ and the morphisms from $\spec B\rightarrow\spec A$  the ring homomorphisms $\phi:A\rightarrow B.$ When $\im\phi^\ast\subseteq\spec B$ is open, we call $\phi^\ast$ an open embedding. We define a Grothendieck topology on $\cat{Aff}$ by letting a covering $\{U_i\rightarrow U\}$ be a collection of open embeddings covering $U.$ This is the Zariski topology.  We have a functor $\mathcal O:\cat{Aff}^\text{op}\rightarrow\cat{Rings}$ given by $\mathcal O(\spec A)=A$ which is a sheaf by its definition as inverse to the $\spec$-functor.
\end{example}

\begin{example} We make Example \ref{Affex} global: We define a scheme of rings as  a topological space X which has an open affine cover $X=\underset{i\in I}\cup U_i$ such that for each $i,$ $U_i\simeq\spec A_i$ (homeomorphic) for a ring $A_i.$ A morphism of schemes $f:X\rightarrow Y$ is is a continuous map such that whenever $U=\spec B\subseteq X,$  $V=\spec A\subseteq Y$ and $U\subseteq f^{-1}(V),$  then $f|_U=\phi^\ast$ for a ring homomorphism $\phi:A\rightarrow B.$ We define a Grothendieck topology on the category of rings $\cat{Sch}.$ A covering $\{U_i\rightarrow U\}$ is a collection of open embeddings covering $U.$ This is the global Zariski topology. On this topology we have a natural definition of a sheaf $\mathcal O:\cat{Sch}^\op\rightarrow\cat{Rings}$ given by $\mathcal O(U)=\underset{\underset{V\rightarrow U}\leftarrow}\lim \mathcal O(V),$ where the limit is taken over all proper sequences of opens in $U.$
When $X$ is any scheme, the standard notation is $\mathcal O|_{\cat{Top}X}=\mathcal O_X.$
\end{example}

Let $\cat C$ be a site with coverings $U=\{U_i\rightarrow U\}_{i\in I}.$ If all ring homomorphisms in $U$ are finitely presented and locally:

\begin{itemize} 
\item[i)] Étale: The global étale topology.
\item[ii)] Flat: The fppf topology (fidèlement plat et de présentation finie).
\end{itemize} 

To avoid  finite presentation, when all homomorphisms in the covering are faithfully flat and quasi-compact, we get the fpqc topology ( fidèlement plat et quasi-compact).

\subsection{Grothendieck Topoi}

Let $\cat C$ be a site, and let $\cat{Shv}_{\cat C}$ denote the category of sheaves (of sets) on $\cat C.$ Because the objects in this category satisfy the universal properties of sheaves, every universal property in the category $\cat{Sets}$, can be lifted to the equivalent universal property for $\cat{Scv}_{\cat C}.$ Thus the logic in topological spaces, is equivalent to the logic in sheaves on  sites.

\begin{lemma} Let $\cat C$ be a site. Then the category $\cat{Shv}_{\cat C}$ is:
\begin{itemize}
\item[(i)] Complete and cocomplete.
\item[(ii)] Cartesian closed.
\end{itemize}
\end{lemma}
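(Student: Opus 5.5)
The plan is to reduce everything to the category of presheaves $\cat{PSh}_{\cat C}$ of functors $\cat C^\text{op}\rightarrow\cat{Sets}$, where all constructions are computed objectwise, and then to transport them to $\cat{Shv}_{\cat C}$. Presheaves inherit every universal property of $\cat{Sets}$ pointwise: for a diagram $D\mapsto F_D$ of presheaves we set $(\lim F_D)(U)=\lim F_D(U)$ and $(\underset{\longrightarrow}\lim F_D)(U)=\underset{\longrightarrow}\lim F_D(U)$. Restriction maps are induced by the universal properties in $\cat{Sets}$, and the Yoneda Lemma \ref{Yoneda} identifies morphisms into or out of these with compatible families. I will also use the sheafification $F\mapsto\mathcal F$ constructed above. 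The Proposition shows that $\mathcal F$ is a sheaf, and as in Lemma \ref{asssheaf} every morphism $F\rightarrow\mathcal G$ into a sheaf factors uniquely through $\mathcal F$. In other words, sheafification is left adjoint to the inclusion $\cat{Shv}_{\cat C}\subseteq\cat{PSh}_{\cat C}$.

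For (i), projective limits come first. I will show that the objectwise limit of sheaves is again a sheaf. Take a covering $\{U_i\rightarrow U\}$ and a compatible family $a_i\in(\lim F_D)(U_i)$. Each component gives a compatible family in $F_D(U_i)$. The sheaf condition of Definition \ref{Sheafdef} glues this family to a unique $a_D\in F_D(U)$. Uniqueness forces the $a_D$ to be compatible with the transition maps, so they define the required element of $\lim F_D(U)$; uniqueness of that element follows componentwise.

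Inductive limits need more care, since an objectwise colimit of sheaves (for instance a coproduct, or a quotient) need not be a sheaf. I will form the presheaf colimit $C$ and put $\underset{\longrightarrow}\lim F_D:=\mathcal C$. Then for a sheaf $\mathcal G$,
$$\mor(\mathcal C,\mathcal G)=\mor(C,\mathcal G)=\underset{\longleftarrow}\lim\mor(F_D,\mathcal G),$$
using the adjunction for the first equality and the presheaf universal property for the second. This is exactly the universal property of the inductive limit in $\cat{Shv}_{\cat C}$.

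For (ii), I will read "Cartesian closed" in the sense of the paper's definition, namely that all fibred products exist. These are special projective limits, so they are already covered by (i). I will also add the stronger statement, that internal homs exist. For sheaves $F,G$, define
$$\mathcal{H}om(F,G)(U)=\mor_{\cat{Shv}}(F|_U,G|_U),$$
with $F|_U$ the restriction to the slice over $U$. The sheaf property follows by gluing morphisms of sheaves locally along a covering, using condition (ii) of the Grothendieck topology to pull coverings back. The adjunction $\mor(H\times F,G)\simeq\mor(H,\mathcal{H}om(F,G))$ then follows from the Yoneda Lemma.

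I expect the main obstacle to be the cocompleteness step. It depends on the adjointness property of $\mathcal F$, which the paper established only for topological spaces in Lemma \ref{asssheaf}. The first thing I would try is to repeat that argument verbatim, using the projective limit over proper sequences of opens together with the universal property of projective limits. If that fails, I would fall back on the classical plus-construction applied twice.
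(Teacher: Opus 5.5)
The paper gives no proof here, only the remark that universal properties of $\cat{Sets}$ lift to $\cat{Shv}_{\cat C}$. Your objectwise argument for projective limits is the correct version of that remark. It also yields fibred products, which is all that the paper's \emph{Cartesian closed} requires. You are right that the remark does not cover inductive limits.

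The gap is in where the adjunction for colimits comes from. The paper's $\mathcal F$ is not an associated sheaf, so repeating its argument cannot work. Take $X=\{a,b\}$ discrete and $U=\{a\}$. The only open $V\subsetneq U$ is $\emptyset$, and every proper sequence of opens in $U$ consists only of copies of $\emptyset$, so $\mathcal F(U)=F(\emptyset)$. For the sheaf $F$ of $\{0,1\}$-valued functions, $F(\emptyset)=\{\ast\}$ while $F(U)=\{0,1\}$. Hence no map $\theta(U):F(U)\rightarrow\mathcal F(U)$ is injective. But the universal property of Lemma \ref{asssheaf}, applied to $\mathcal G=F$ and $\phi=\id_F$, would give $\psi$ with $\psi\circ\theta=\id_F$, forcing $\theta$ to be injective. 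So that universal property fails, and so does the Corollary $F\simeq\mathcal F$. Your equality $\mor(\mathcal C,\mathcal G)=\mor(C,\mathcal G)$ therefore has nothing behind it.

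So commit to your fallback. Let $F^+(U)$ be the colimit, over coverings $\{U_i\rightarrow U\}$ directed by refinement, of the equalizers of $\prod_i F(U_i)\rightrightarrows\prod_{i,j}F(U_i\times_U U_j)$; common refinements exist by axioms (ii) and (iii). Then $F^{++}$ is a sheaf, $F\rightarrow F^{++}$ is universal, and your formula for $\underset{\longrightarrow}\lim F_D$ goes through.

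This step needs a hypothesis the statement omits. The colimit is indexed by the coverings of $U$, which for an arbitrary site may form a proper class. You must therefore assume $\cat C$ small, or at least that each object has a small cofinal family of coverings. This is not a formality: on large sites such as the fpqc topology on all affine schemes, Waterhouse exhibited a presheaf with no associated sheaf, so your argument genuinely breaks there. The same smallness is needed for $\mor(F|_U,G|_U)$ to be a set in your internal hom. With that hypothesis added, your outline is the standard proof.
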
 

\begin{definition} A Grothendieck Topos over sets, is a category $\cat{Shv}_{\cat C}$ for a site $\cat C.$
\end{definition}

A category $\cat C$ is called a Grothendieck category if it has the same properties as the category of sheaves of modules over a scheme.

We see that $\cat{Shv}_{\cat C}$ is a Grothendieck category if the objects are actually sheaves of abelian groups. Notice that these automatically are sheaves in the category of $\mathbb Z$-modules, so that global functions rings and stalks of function rings makes sense.

It is well known that we can get information about a particular variety, or manifold, by studying its sheaf of functions.
K-theory is about (among other things) studying a site $\cat C$ by the study of its Grothendieck topos. The underlying philosophy is that objects are classified by comparison, not as objects standing alone in a vacuum. So let $K$ be a set (or proper class) of objects that we want to study. We search for a small site $\cat K$ having $K$ as the set of objects. This is included in the theory of fibred categories and Descent.

\begin{definition} A stack is an object in a Grothendieck topos. 
\end{definition}

\section{Grothendieck Schemes of objects in Abelian Categories}\label{SOACsection}

\subsection{Base Points and Local Representability}
Let $\cat C$ be a locally small category, i.e., the collections $\mor(C_1,C_2)$ for $C_1,C_2\in\ob\cat C,$ are sets. Fix an object $P\in\ob\cat C.$

\begin{definition}{(The category of $P$-points.)} For an object $X\in\ob\cat C$ we define the category $\cat{Pts}_P(X),$ the category of $P$-points in $X,$ as the category with objects the set $$\ob\cat{Pts}_P(X)=\operatorname{pts}_P(X)=\mor(P,X),$$ and where a morphism from $x:P\rightarrow X$ to $y:P\rightarrow X$ is a commutative diagram $$\xymatrix{X\ar[rr]^\phi&&X\\&P.\ar[ul]^x\ar[ur]_y&}$$
\end{definition}
Notice that $\cat{Pts}_P(X)$ is a concrete category. Let $B\subseteq\ob\cat C$ be a sub-collection of the objects in $\cat C$ which objects we will call $B$-points, or base-points, if $B$ is evident. We extend the definition above.

\begin{definition}{(The category of $B$-points.)} For an object $X\in\ob\cat C$ we define the category $\cat{Pts}_B(X),$ the category of base-points in $X,$ as the category with objects the set $$\ob\cat{Pts}_B(X)=\operatorname{pts}_B(X)=\coprod_{P\in B}\mor(P,X),$$ and where a morphism from $x:P_1\rightarrow X$ to $y:P_2\rightarrow X$ is a commutative diagram $$\xymatrix{X\ar[r]^\phi&X\\P_1\ar[u]^x\ar[r]&P_2.\ar[u]_y}$$
\end{definition}

We notice that $\cat{Pts}_B(X)$ is a concrete category. Consider the category $\cat C$ with fixed base points $B\subseteq\ob\cat C.$ For each object $C$ we want to fix a unique base-point, if it exists, $c\in\pts_B(C).$ If we define $E(B)$ as the collections of $D\in\ob\cat C$ such that $\pts_B(D)=\emptyset,$ this is the function $f:\ob\cat C\setminus E(B)\rightarrow\underset{C\in\ob\cat C\setminus E(B)}\coprod\pts_B(C)$ giving each object $C$  a fixed base-point $f(C)\in\pts_B(C),$ when it exists.

\begin{definition}{(Subcategories of base-pointed objects.)}
Let $\overline{\cat C}_B$ denote the category with objects $\{(C,c)|C\in\ob\cat C,\pts_B(C)\neq\emptyset, c=f(C)\in\operatorname{pts}_B(C)\}$ and with morphisms $\phi:(C,c)\rightarrow (D,d)$ the commutative diagrams \begin{equation}\label{diag1}
\xymatrix{C\ar[r]^\phi&D\\P_1\ar[u]^c\ar[r]&P_2.\ar[u]_d}\end{equation} We will call any full subcategory $\cat C_B\subseteq\overline{\cat C}_B$ a base-pointed subcategory of $\cat C.$
\end{definition}

In a commutative diagram (\ref{diag1}) we will use the notation $\phi(c)=d.$
Given an object $X\in\ob\cat C$ and a base-point  $x_P:P\rightarrow X, P\in B.$ Let $\cat C_B$ be a base-pointed subcategory of $\cat C,$ and consider the contravariant functor $F:\cat C_B\rightarrow\sets$ defined by $F(Y,y_Q)=\{\rho:Y\rightarrow X|\rho(y_Q)=x_P\}.$ This functor is represented by $((X_x,x_R),\rho)$ if the morphism $\rho$ fitting in the below diagram  exists and is unique in the sense given in Definition \ref{repdef}.
$$\xymatrix{X_x\ar[r]^\rho&X\\R\ar[u]^{x_R}\ar[r]&P.\ar[u]_{x_P}}$$

\begin{definition}\label{repdef} Let $X\in\ob\cat C$ and $x_P:P\rightarrow X.$ Then the localization of $X$ in $x,$ if it exists, is defined as a base-pointed object $(X_x,x_R)\in\ob\cat C_P$ characterized by the following universal property: There is a morphism $\rho:X_x\rightarrow X$ such that $\rho(x_R)=x_P,$ and if  $(L,l_Q)$ is a base-pointed object with a morphism $\gamma:L\rightarrow X$ such that $\gamma(l_Q)=x_P,$ there exists a unique morphism $\kappa:L\rightarrow X_x$ such that $\kappa(l_Q)=x_R.$
\end{definition}

We notice that the existence of localization in a $P$-point is dependent on the choice of a base-pointed subcategory, usually called localizing subcategories.

\begin{example}\label{Groupexample} In the category $\cat{Grps}$ of groups we fix the free (abelian) group on one element $P=\langle x\rangle.$ For every group $G$ we have a bijection $G\simeq\mor(P,G)$ as every group-homomorphism $\phi:P\rightarrow G$ is determined by its value on $x.$ In this case we consider the  $P$-pointed subcategory with objects $$\ob\cat C_P=\{(\langle x\rangle/\langle x^p\rangle,x)|p\text{ prime}\}.$$
In this example, we let the base-points be the single object $P,$ that is $B=\{P\}.$ A morphism in this subcategory is a group homomorphism $f:\langle x\rangle/\langle x^q\rangle\rightarrow\langle x\rangle/\langle x^p\rangle,$ implying that $p=q.$ Thus for any group $G$ the localization $G_x$ in $x$ is the group $\langle x\rangle/\langle x^p\rangle$ where $p$ is the smallest prime such that $x^p=1.$ Notice the close connection to $p$-groups.
\end{example}

\begin{example} 
Let $k$ be a field and let $\cat{CAlg}_k$ denote the category of commutative $k$-algebras. As usual, we choose to consider the opposite category $\cat{CAlg}_k^o,$ 
and fix the object $P=k.$ Again, we let the base-points be the set $\{P\}$ with one element. As $P$-pointed subcategory we choose the category of all local $k$-algebras $A_\mathfrak m$ such that $A/\mathfrak m\simeq k.$
Then the localization of $A$ in $x:A\rightarrow k, \mathfrak m=\ker x,$ is a local algebra $A_{\mathfrak m}$ with maximal ideal $\mathfrak n,$ characterized by the following universal property: There exists a homomorphism $\rho:A\rightarrow A_{\mathfrak m}$ such that $\rho^{-1}(\mathfrak n)=\mathfrak m,$ and if there is another local algebra $B$ with maximal ideal $\mathfrak q$ and a homomorphism $\gamma:A\rightarrow B$ such that $\gamma^{-1}(\mathfrak q)=\mathfrak m$ then there is a unique morphism $\phi:A_{\mathfrak m}\rightarrow B$ with $\phi^{-1}(\mathfrak q)=\mathfrak n.$
\end{example}

\begin{example}
Let $k$ be a field and let $\cat{Vec}_k$ be the category of vector spaces over $k.$ As base-points in $\cat{Vec}_k$ we choose the single object $P=k,$ and as base-pointed subcategory, we choose the single object $(k,\id).$ Then the localization of a vector space $V$ in a $P$-point $x_{P}$ is isomorphic to the one-dimensional vector space $\operatorname{Span}(v_p), v_p=x_{P}(1).$ 
\end{example}

\begin{example}
The easiest example is maybe in the category of sets. Then we choose a set with one element $P=\{0\}$ as the only base-point, and the base-pointed subcategory as the sets with exactly one element. Then a localization of a set $S$ in $x_P$ is the one pointed set $\{x_P(0)\}.$
\end{example}

\begin{example}
Let $\cat{Daff}$ be the category of differentiable manifolds diffeomorphic to $\mathbb R^n$ for some $n\in\mathbb N.$ Morphisms in this category are $C^\infty$-functions. As base-points $B$ we choose the manifold $\mathbb R$ and as base-pointed subcategory the one-dimensional manifolds, i.e., diffeomorphic to $\mathbb R.$ The localization of an affine manifold $M$ in a point $p\in M$ is diffeomorphic to $\mathbb R.$   
\end{example}

The next example (and Example \ref{Groupexample}) proves the richer structure in algebra.

\begin{example} Let $\cat{CRing}$ be the category of commutative rings with unit. The base-points are formed by the collection of fields $k$, and we let the base-pointed subcategory be the subcategory of local rings $L$ with base point $\kappa:L\rightarrow L/\mathfrak m$ where $\kappa$ is the quotient map and $\mathfrak m$ is the unique maximal ideal in $L.$ For a ring $A$ with base-point $x:A\rightarrow k$ we have $\mathfrak p=\ker x$ and and so there is a homomorphism $\rho:A\rightarrow A_{\mathfrak p}$ with $\rho^{-1}(\mathfrak p A_{\mathfrak p})=\mathfrak p,$ and such that if $B$ is any other local ring with this property, there is a unique morphism $A_{\mathfrak p}\rightarrow B.$ This is the ordinary definition  localization of the ring $A$ in the prime ideal $\mathfrak p.$  
\end{example}

\subsection{Schemes of Objects}

Let $\cat C$ be a locally small category with a fixed collection of base-points $B\subseteq\ob\cat C$ and a fixed base-pointed subcategory $\cat C_B.$ Assume that for each object $X$ in $\cat C$ the localization $X_x$ in all base-points $x$ exists, and that the localizations belong to a category where direct products and coproducts exist. Finally, we will assume that images and coimages exists in $\cat C.$ This is the case for all abelian categories, in particular in the examples above.

\begin{definition}{Image and coimage.}
Given a morphism $f:X\rightarrow Y.$ Then the image of $f,$ if it exists, is a monomorphism $m:I\rightarrow Y$ satisfying the following universal property:
\begin{itemize}
\item[1)] There exists a morphism $e:X\rightarrow Y$ such that $f=m\circ e.$
\item[2)] For any object $I'$ with a morphism $e':X\rightarrow I'$ and a monomorphism $m':I'\rightarrow Y$ such that $f=m'\circ e',$ there exists a unique morphism $v:I\rightarrow I'$ such that $m=m'\circ v.$
\end{itemize}
The coimage of $f,$ if it exists, is an epimorphism $c:X\rightarrow C$ satisfying the following universal property:
\begin{itemize}
\item[1)] There exists a morphism $m:C\rightarrow Y$ such that $f=m\circ e.$
\item[2)] For any object $C'$ with a epimorphism $e':X\rightarrow C'$ and a morphism $m':C'\rightarrow Y$ such that $f=m'\circ e',$ there exists a unique morphism $v:I'\rightarrow I$ such that $m=v\circ e'=e.$
\end{itemize}
\end{definition}

In the following, we treat the covariant and the contravariant categories as two separate cases, starting with the covariant case.

For any object $X$ in $\cat C,$ let $O(X)=\underset{x\in B}\coprod X_x.$ By the universal property of coproducts, there is a unique morphism $$\gamma:O(X)\rightarrow X,$$ and we put $\mathcal O(X)=C(\gamma)$ where $C(\gamma)$ is the coimage of $\gamma.$ 

In the opposite category, put $O(X)=\underset{x\in B}\prod X_x.$ Then there is a unique morphism $$\gamma:X\rightarrow O(X),$$ and we put $\mathcal O(X)=I(\gamma)$ where $I(\gamma)$ is the image of $\gamma.$

From now on, we assume that $\cat C$ is a site, i.e., that it has a Grothendieck topology.

\begin{definition}\label{affschdef} We call $\mathcal O(X)$ the global object of $X$ over $B.$ If $\mathcal O(X)\simeq X,$ we call $X$ an affine object in $\cat C.$ If $X$ has a covering $\{U_i\rightarrow X\}_{i\in I}$ such that $\mathcal O(U_i)\simeq U_i,$ that is, $X$ has an affine covering, we call $X$ an affine scheme of objects in $\mathcal C.$ If all objects in $\cat C$ are affine schemes, we say that $\cat C$ is complete. 
\end{definition}

Let $X=\{U_i\}_{i\in I}$ be an indexed collection of objects in the complete site $\cat C.$ Define $\mathcal O_X=\underset{\underset{i\in I}\leftarrow}\lim\ \mathcal O(U_i)$ where the limit is taken over the coverings. We have that $\pts_B(\mathcal O_X)=X,$ and that if $Y=\{U_j\}_{j\in J}$ is another collection of objects, then a morphism $f^\ast:\mathcal O_Y\rightarrow\mathcal O_X$ induces a morphism $f:\pts_B(X)\rightarrow\pts_B(Y).$

\begin{definition}\label{Schdef} Let $\cat C$ be complete site. A $\cat C$-scheme is a collection $X=\{U_i\}_{i\in I}$ of indexed objects in $\cat C.$ A morphism of $\cat C$-schemes, is a map of sets $f:X\rightarrow Y$ induced by a $\cat C$-morphism $f^\ast:\mathcal O_Y\rightarrow\mathcal O_X.$ Notice that this defines a site $\cat{\cat C}-\cat{Sch}$ and that that $\mathcal O(X)=\mathcal O_X$ defines a sheaf on this site.
\end{definition}

\end{document}

%% file: defn.tex
\DeclareMathOperator{\id}{id}

\DeclareMathOperator{\im}{im}

\DeclareMathOperator{\mor}{Mor}

\DeclareMathOperator{\spec}{Spec}

\DeclareMathOperator{\ob}{ob}

\DeclareMathOperator{\pts}{Pts}

\newcommand{\op}{\mathit{op}}

\newcommand{\cat}[1]{\mathbf{#1}}

\newcommand{\sets}{\mathbf{Sets}}

\newcommand{\grmcat}[1]

